\newtheorem{thm}{Theorem}
\newtheorem{prop}{Proposition}
\newcommand\comaj{{\mathop\mathrm{comaj}}}
\DeclareFontFamily{U}{wncy}{}
\DeclareFontShape{U}{wncy}{m}{n}{<->wncyr10}{}
\DeclareSymbolFont{mcy}{U}{wncy}{m}{n}
\DeclareMathSymbol{\Sh}{\mathord}{mcy}{"58}
\newcommand\martial{{\Large \mars\!\!}}
\newcommand\tensor\otimes
\newcommand\qtorial{{{\textstyle q}\atop {\scriptscriptstyle\bullet}}}
\newcommand\calS{{\mathcal S}}
\newcommand\NN{{\mathbb N}}
\newcommand\ZZ{{\mathbb Z}}
\newcommand\QQ{{\mathbb Q}}
\newcommand\CC{{\mathbb C}}
\newcommand\defn[1]{{\bf #1}}
\newcommand\onto\twoheadrightarrow
\newcommand\into\hookrightarrow
\newcommand\iso\cong
\newcommand\from\longleftarrow
\newcommand\actson{\circlearrowright}
\newcommand\maj{{\mathop\mathrm{maj}}}
\begin{document}

\title[The commutant of divided difference operators]{The commutant of divided difference operators, Klyachko's genus,
  and the comaj statistic}

\author{Christian Gaetz}
\address{Department of Mathematics, University of California, Berkeley}
\email{gaetz@berkeley.edu}

\author{Rebecca Goldin}
\address{Department of Mathematical Sciences, George Mason University}
\email{rgoldin@gmu.edu}

\author{Allen Knutson}
\address{Department of Mathematics, Cornell University}
\email{allenk@math.cornell.edu}

\date{August 6, 2024}

\begin{abstract}
In \cite{ZachEtAl,Nenashev,AnnaOliver} are studied certain
  operators on polynomials and power series that commute with all
  divided difference operators $\partial_i$. We introduce a second set
  of ``martial'' operators $\martial_i$ that generate the full commutant,
  and show how a Hopf-algebraic approach naturally reproduces the operators
  $\xi^\nu$ from \cite{Nenashev}. We then pause to study Klyachko's homomorphism
  $H^*(Fl(n)) \to H^*($the permutahedral toric variety$)$, and extract the
  part of it relevant to Schubert calculus, the ``affine-linear genus''.
  This genus is then re-obtained using Leibniz combinations of the
  $\martial_i$. We use Nadeau-Tewari's $q$-analogue of Klyachko's genus
  to study the equidistribution of $\ell$ and $\comaj$
  on $[n]\choose k$, generalizing known results on $S_n$.
\end{abstract}

\keywords{divided difference operators, Schubert calculus, comaj statistic}

\maketitle

\section{The martial operators $\martial_\pi$}

\subsection{The ring of Schubert symbols}

Given a Dynkin diagram $D$ with Weyl group $W(D)$, define the
\defn{ring of Schubert symbols} $H(D)$ as the cohomology ring of
the associated (possibly infinite-dimensional) flag variety,
with the usual Schubert basis $\{\calS_w\colon w\in W(D)\}$. 
The Dynkin diagrams that will interest us are primarily the
semi-infinite $A_{\ZZ_+}$ and the biinfinite $A_\ZZ$.
In these type $A$ cases $W(D)$ is the group of finite permutations
of $\ZZ_+$ or of $\ZZ$. An important difference between the two is
that $H(A_{\ZZ_+})$ is generated by $\{\calS_{r_i}\colon i\in \ZZ_+\}$, where $r_i$ is a simple transposition,
so the multiplication is entirely determined by Monk's rule, whereas $H(A_{\ZZ})$
requires additional generators $\{\calS_{r_1r_2\cdots r_k}\}$ and determining
its multiplication involves also the flag Pieri rule.
With all that in mind we largely abandon the geometry and work with these
rings symbolically.

For each vertex $\alpha$ of $D$ hence generator $r_\alpha \in W(D)$,
we have an operator $\partial_\alpha \actson H(D)$,
pronounced ``partial $\alpha$'':
$$ \partial_\alpha \,\calS_\pi :=
\begin{cases}
  \calS_{ \pi r_\alpha} &\text{if }  \pi r_\alpha < \pi \\
  0 &\text{if } \pi  r_\alpha > \pi
\end{cases}
$$
from which we can well-define $\partial_{\pi}$ for any $\pi \in W(D)$ using products.

\begin{thm}[Lascoux-Sch\"utzenberger]\label{thm:LS}
  There is an isomorphism $H(A_{\ZZ_+}) \to \ZZ[x_1,x_2,\ldots]$
  taking the Schubert symbol $\calS_\pi$ to its ``Schubert polynomial''
  $S_\pi(x_1,x_2,\ldots)$. On the target ring, $\partial_\alpha$ acts
  by Newton's divided difference operation.
\end{thm}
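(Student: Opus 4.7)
The plan is to construct the isomorphism explicitly by defining Schubert polynomials on the polynomial side via Newton's divided differences, verify that they form a $\ZZ$-basis, and then check that the induced bijection of bases is a ring map. First, I would verify that Newton's divided difference operators $\partial_i^N f := (f - r_i f)/(x_i - x_{i+1})$ on $\ZZ[x_1, x_2, \ldots]$ (where $r_i$ swaps $x_i$ and $x_{i+1}$) satisfy the nil-Coxeter relations: $(\partial_i^N)^2 = 0$, $\partial_i^N \partial_{i+1}^N \partial_i^N = \partial_{i+1}^N \partial_i^N \partial_{i+1}^N$, and $\partial_i^N \partial_j^N = \partial_j^N \partial_i^N$ for $|i-j| \geq 2$. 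All three are one-line computations, and together they allow one to well-define $\partial_u^N$ for every $u \in W(A_{\ZZ_+})$ via any reduced word.

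Second, following Lascoux--Sch\"utzenberger, I would define $S_\pi \in \ZZ[x_1, x_2, \ldots]$ for $\pi \in W(A_{\ZZ_+})$ by choosing $n$ large enough that $\pi \in S_n$ and setting $S_\pi := \partial_{\pi^{-1} w_0^{(n)}}^N (x_1^{n-1} x_2^{n-2} \cdots x_{n-1})$, then verifying stability as $n$ grows so that $S_\pi$ depends only on $\pi$. By construction one has $\partial_i^N S_\pi = S_{\pi r_i}$ if $\pi r_i < \pi$, and $\partial_i^N S_\pi = 0$ otherwise (the latter case uses $(\partial_i^N)^2 = 0$ together with the fact that $\pi^{-1} w_0^{(n)}$ has an $r_i$-adjacent reduced word). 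This exactly matches the combinatorial action of $\partial_i$ on $\calS_\pi$.

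Third, I would show $\{S_\pi \colon \pi \in W(A_{\ZZ_+})\}$ is a $\ZZ$-basis of $\ZZ[x_1, x_2, \ldots]$. The standard route is to introduce the Lehmer code $c(\pi) = (c_1(\pi), c_2(\pi), \ldots)$ and prove by induction on $\ell(\pi)$ that the monomial $x_1^{c_1(\pi)} x_2^{c_2(\pi)} \cdots$ appears in $S_\pi$ with coefficient $1$ and is the maximal monomial in some term order. Since the code map $\pi \mapsto c(\pi)$ is a bijection onto finite sequences of nonnegative integers, triangularity with the monomial basis delivers the basis property. Combined with the previous step this produces a $\ZZ$-linear isomorphism $\calS_\pi \mapsto S_\pi$ intertwining the two actions of $\partial_\alpha$.

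The remaining and principal obstacle is showing this isomorphism respects multiplication. As noted in the introduction, the multiplication on $H(A_{\ZZ_+})$ is determined by Monk's rule, so it suffices to prove that Schubert polynomials satisfy the same Monk identity: $S_{r_k} \cdot S_\pi = \sum S_{\pi \tau}$, summed over the transpositions $\tau$ prescribed by Monk. The base case $S_{r_k} = x_1 + \cdots + x_k$ follows directly from the recursive definition. For the inductive step I would choose $i$ with $\pi r_i < \pi$, apply $\partial_i^N$ to both sides of the conjectured identity, and simplify using the Leibniz rule $\partial_i^N(fg) = (\partial_i^N f)\, g + (r_i f)(\partial_i^N g)$ together with $\partial_i^N S_{r_k} \in \{0, 1\}$. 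The careful bookkeeping --- matching transpositions on the right-hand side with those surviving the divided difference, and handling the ``edge cases'' where $k = i$ or $k = i+1$ separately --- is the technical core of the proof.
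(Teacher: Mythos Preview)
The paper does not prove this theorem: it is stated with attribution to Lascoux--Sch\"utzenberger and treated as known background (and alluded to again, from the geometric side, in the ``Interlude'' \S2.3). So there is nothing to compare against; your proposal is a reasonable reconstruction of the classical argument rather than a match or mismatch with anything in the text.

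On the substance of your sketch, one point deserves tightening. In the Monk induction you ``choose $i$ with $\pi r_i < \pi$'' and apply a single $\partial_i^N$. Knowing that the difference $D := S_{r_k}S_\pi - \sum S_{\pi\tau}$ is killed by one $\partial_i^N$ only tells you $D$ is symmetric in $x_i,x_{i+1}$, which is far from $D=0$. What actually closes the induction is the observation that in $\ZZ[x_1,x_2,\ldots]$ the simultaneous kernel of \emph{all} $\partial_i^N$ consists of constants (a genuinely symmetric polynomial in infinitely many variables cannot involve any $x_j$), so it suffices to check $\partial_i^N D = 0$ for every $i$, not just a chosen descent. For $i$ an ascent of $\pi$ the verification is different in flavor from the descent case (Leibniz gives $\partial_i^N(S_{r_k}S_\pi)=\delta_{i,k}S_\pi$, and on the right only the term $\tau=r_i$ survives, which appears exactly when $i=k$); both cases feed your ``careful bookkeeping''. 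With that amendment the outline is correct.
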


Call a ring homomorphism from $H(D)$ to some other ring a
\defn{genus},\footnote{This terminology is stolen from the study of
  various cobordism rings of a point, e.g. the ``Hirzebruch genus''
  and ``Witten genus'' are ring homomorphisms to $\ZZ$.}
making the above isomorphism the \defn{Lascoux-Sch\"utzenberger genus}.

It was observed in \cite{ZachEtAl,AnnaOliver} that the operator
$\nabla := \sum_i \frac{d}{dx_i}$ on the target ring has two nice properties:
 it commutes with each $\partial_i$, and
 its application to any Schubert polynomial is a positive combination
  of Schubert polynomials.
Our goal in this section is to characterize operations of the
first type, with an eye toward the second.
To study this commutant it will be handy to work with algebra actions.

\subsection{Two commuting actions of the nil Hecke algebra}
\label{ssec:twoactions}

Let $Nil(D)$ denote formal (potentially infinite) linear combinations of
elements $\{d_\pi \colon \pi \in W(D)\}$, with a multiplication defined by
$ d_\pi d_\rho :=
\begin{cases}
  d_{\pi \rho} &\text{if }\ell(\pi \rho) = \ell(\pi) + \ell(\rho) \\
  0 &\text{if }\ell(\pi \rho) < \ell(\pi) + \ell(\rho).
\end{cases}
$
This multiplication extends to infinite sums in a well-defined way, insofar
as any $w\in W(D)$ has only finitely many length-additive
factorizations. Slightly abusing\footnote{One ordinarily considers only
  finite linear combinations, but we have need of certain infinite ones,
  and this simplifies the statement of Theorem \ref{thm:commutant}.}
terminology, we call this $Nil(D)$ the \defn{nil Hecke algebra}.
The association $d_\pi \mapsto \partial_\pi$ gives an action
of the opposite algebra $Nil(D)^{op}$ on $H(D)$; the infinitude
of the sums in $Nil(D)$ is again not problematic, because $H(D)$'s elements
are finite sums of Schubert symbols.

Define $\martial_\alpha$ (pronounced ``martial $\alpha$'') by
$$ \martial_\alpha \calS_\pi :=
\begin{cases}
  \calS_{r_\alpha \pi} &\text{if } r_\alpha \pi < \pi \\
  0 &\text{if } r_\alpha \pi > \pi 
\end{cases}
$$ \hskip -.05in 
We can well-define $\martial_{\prod Q} := \prod_{q\in Q} \martial_q$
for each reduced word $Q$.
  
\begin{thm}\label{thm:commutant}
  The map $d_\pi \mapsto \martial_\pi$ defines an action of $Nil(D)$
  on $H(D)$ (unspoiled by the potential infinitude), commuting with
  the $Nil(D)^{op}$-action by the operators $\partial_\alpha$.
  Conversely, each operator on $H(D)$ that commutes with all operators
  $\partial_\alpha$ arises as the action of a unique element of $Nil(D)$.

  In short, $Nil(D)$ and $Nil(D)^{op}$ are one another's commutants
  in their actions on $H(D)$.
\end{thm}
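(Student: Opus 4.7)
The theorem asserts three claims: that $d_\pi \mapsto \martial_\pi$ gives a well-defined action of $Nil(D)$ on $H(D)$, that this action commutes with the $\partial_\alpha$-action of $Nil(D)^{op}$, and conversely that every operator commuting with all $\partial_\alpha$ is of the form $\martial_c$ for a unique $c \in Nil(D)$. My plan treats the first two parts quickly and concentrates on the last.

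For the action itself, I would verify that for any reduced word for $\pi$, the composite $\martial_\pi$ sends $\calS_\sigma \mapsto \calS_{\pi\sigma}$ when $\ell(\pi\sigma) + \ell(\pi) = \ell(\sigma)$ and to $0$ otherwise; independence from the chosen reduced word follows from the subword property of Coxeter groups, and the algebra rule $\martial_\pi \martial_\rho = \martial_{\pi\rho}$ (if lengths add, else $0$) then drops out by composition. For commutation, $\martial_\alpha \partial_\beta = \partial_\beta \martial_\alpha$ reduces to a case analysis on $\calS_\pi$: since $\ell(r_\alpha \pi r_\beta) \in \{\ell(\pi) - 2, \ell(\pi), \ell(\pi) + 2\}$, each case yields either $\calS_{r_\alpha \pi r_\beta}$ from both orders of composition or $0$ from both. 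Commutation of general $\martial_\pi$ with $\partial_\rho$ then follows by iteration.

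The main obstacle is the converse: showing the commutant of $\{\partial_\alpha\}$ is no larger than the $\martial$-image of $Nil(D)$. Given $T$ commuting with all $\partial_\alpha$, expand $T\calS_\pi = \sum_\sigma c_{\sigma,\pi}\calS_\sigma$. Applying $\partial_\alpha$ to both sides and matching Schubert coefficients as $\alpha$ and $\pi$ vary yields two rules: (i) $c_{\sigma,\pi} = 0$ whenever $\alpha$ is a right descent of $\sigma$ but not of $\pi$, and (ii) $c_{\sigma,\pi} = c_{\sigma r_\alpha, \pi r_\alpha}$ whenever $\alpha$ is a right descent of both. Iterating (ii) along a reduced word for $\sigma$ simultaneously strips right descents from the pair $(\sigma,\pi)$, reaching $(e, \pi\sigma^{-1})$ exactly when $\sigma$ is a reduced right factor of $\pi$; any other situation is cut off by (i) and forces $c_{\sigma,\pi} = 0$. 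Setting $a_\gamma := c_{e,\gamma^{-1}}$, we obtain $c_{\sigma,\pi} = a_{\sigma\pi^{-1}}$ when $\sigma$ is a reduced right factor of $\pi$ and $0$ otherwise, which is exactly the expansion of $\bigl(\sum_\gamma a_\gamma \martial_\gamma\bigr)\calS_\pi$.

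Thus $T$ is the action of $c := \sum_\gamma a_\gamma d_\gamma \in Nil(D)$; this potentially infinite sum acts on each $\calS_\pi$ via only finitely many nonzero summands, since $\martial_\gamma \calS_\pi \ne 0$ forces $\gamma^{-1}$ to be a reduced left factor of $\pi$, of which there are only finitely many. Uniqueness is immediate, as $a_\gamma$ is recovered as the coefficient of $\calS_e$ in $T\calS_{\gamma^{-1}}$. The companion claim, that $Nil(D)^{op}$ is the commutant of $Nil(D)$, follows by the symmetric argument (or by conjugating with the involution $\calS_\pi \mapsto \calS_{\pi^{-1}}$, which interchanges $\partial_\alpha$ and $\martial_\alpha$).
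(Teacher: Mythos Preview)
Your argument is correct and follows essentially the same route as the paper's. The paper packages your coefficient extraction via the functional $\int h := (\text{coefficient of }\calS_e\text{ in }h)$, observing that $h$ is determined by all values $\int\partial_\rho h$; your rules (i)--(ii) and their iteration are exactly what underlies the paper's one-line verification that $\int\partial_\rho C(\calS_\sigma) = \int\partial_\rho\bigl(\sum_\pi c_\pi\martial_\pi\bigr)(\calS_\sigma)$, with the same definition $c_\pi = \int C(\calS_{\pi^{-1}})$ that you call $a_\gamma$.
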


This then characterizes the operators that commute with all $\partial_i$;
we don't know any significance of the resulting algebra again being $Nil(D)$.   

\begin{proof}
  For $h \in H(D)$, let $\int h$ denote the coefficient of $\calS_e$ in $h$.
  Each $h = \sum_\pi h_\pi \calS_\pi$ is determined by the values
  $$ \int (\partial_\rho h) = \int \sum_\pi h_\pi \partial_\rho \calS_\pi
  = \sum_\pi h_\pi \int \partial_\rho \calS_\pi
  = \sum_\pi h_\pi \delta_{\pi,\rho} = h_\rho 
$$ \hskip -.05in 
  We'll make use of the easy fact
  $ \int \martial_\pi \,\partial_\rho \,\calS_\sigma =
  \begin{cases}
    1 &\text{if $\sigma= \pi^{-1} \rho$ and }\ell(\sigma)=\ell(\pi)+\ell(\rho)\\
    0 &\text{otherwise.}
  \end{cases}
  $
  
  Now let $C$ be an operator on $H(D)$ commuting with all $\partial_\pi$.
  For each $\pi\in W(D)$, let $c_\pi := \int C(\calS_{\pi^{-1}})$.
  We then confirm $C = \sum_\pi c_\pi \martial_\pi$ using the
  determination above:
  \begin{eqnarray*}
    \int \partial_\rho C(\calS_\sigma)
    &=& \int C(\partial_\rho \calS_\sigma)
        = \int C\left(\calS_{\sigma \rho^{-1}}\, 
        \left[\ell(\sigma \rho^{-1})=\ell(\sigma)-\ell(\rho)\right] \right) \\
    &=& \left[\ell(\sigma \rho^{-1})=\ell(\sigma)-\ell(\rho)\right]\ 
        c_{\rho \sigma^{-1}} \\
    \int  \partial_\rho \left( \sum_\pi c_\pi \martial_\pi\right) (\calS_\sigma)
    &=&  \sum_\pi c_\pi \int \martial_\pi \,\partial_\rho \,\calS_\sigma 
    \ \ =\ \ \sum_\pi c_\pi \, \left[\sigma 
= \pi^{-1}\rho\right]\, \big[\ell(\sigma)=\ell(\pi)+\ell(\rho)\big] \\
    &=& c_{\rho \sigma^{-1}} \left[\ell(\rho\sigma^{-1})=\ell(\sigma)-\ell(\rho)\right]
  \end{eqnarray*}
  Here $[P]=1$ if $P$ is true, $[P]=0$ if $P$ is false, for a statement $P$.
\end{proof}

{\em Example.} The action of $\nabla := \sum_i \frac{d}{dx_i}$ on polynomials,
pulled back to an action on $H(A_{\ZZ_+})$, is given by the operator
$\sum_{n \in \NN_+} n \martial_n$.
What is particularly special about $\nabla$ is that it is
a differential (i.e. satisfies the Leibniz rule), and is of degree $-1$.

\begin{thm}\label{thm:Leibniz}
  Let $\sum_\alpha c_\alpha \martial_\alpha \in Nil(D)$ be an operator of
  degree $-1$. If it is a differential (and $D$ is simply-laced,
  for convenience) then each $c_\alpha$ is $\frac 1 2 \sum_\beta c_\beta$
  where the $\beta$s are $\alpha$'s neighbors in $D$.

  In particular if $D$ is of finite type $ADE$, the only system
  of coefficients $(c_\alpha)$ is zero. If $D=A_{\ZZ_+}$, the only options
  are multiples of $c_i \equiv i$. If $D=A_\ZZ$, the space of such systems
  is two-dimensional, spanned by $c_i \equiv i$ and $c_i \equiv 1$.
\end{thm}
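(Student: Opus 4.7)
The plan is to extract the recurrence $2c_\alpha = \sum_{\gamma \sim \alpha} c_\gamma$ from the Leibniz rule applied to the single degree-$2$ product $\calS_{r_\alpha}^2$, and then to solve the resulting linear system case by case.

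First I would compute $\calS_{r_\alpha}^2$ in the Schubert basis using the Chevalley formula
$$ \calS_{r_\alpha} \cdot \calS_w \;=\; \sum_\beta \langle \omega_\alpha, \beta^\vee \rangle \, \calS_{w r_\beta}, $$
where $\beta$ ranges over positive roots with $\ell(w r_\beta) = \ell(w)+1$. With $w = r_\alpha$ the constraint forces $\ell(r_\beta) \in \{1,3\}$; the simple-root case either collapses to the identity ($\beta = \alpha$) or yields a vanishing coefficient ($\beta \neq \alpha$). The surviving contributions come from positive roots of height $2$, which in simply-laced type are exactly $\beta = \alpha_\gamma + \alpha_\delta$ for adjacent $\gamma \sim \delta$. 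The condition $\langle \omega_\alpha, \beta^\vee \rangle \neq 0$ picks out $\alpha \in \{\gamma, \delta\}$, and a short braid computation gives $r_\alpha r_\beta = r_\gamma r_\alpha$ (where $\gamma$ is the other endpoint of $\beta$), yielding
$$ \calS_{r_\alpha}^2 \;=\; \sum_{\gamma \sim \alpha} \calS_{r_\gamma r_\alpha}. $$

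Next I would evaluate $L := \sum_\beta c_\beta \martial_\beta$ on these symbols. Since $L$ has degree $-1$, the only $\martial_\beta$ acting nontrivially on $\calS_{r_\alpha}$ is $\martial_\alpha$, so $L(\calS_{r_\alpha}) = c_\alpha$. For $L(\calS_{r_\gamma r_\alpha})$, only $\beta = \gamma$ produces a length-$1$ word $r_\beta r_\gamma r_\alpha$ (any other choice leaves it reduced of length $3$, using that $r_\alpha r_\gamma r_\alpha = r_\gamma r_\alpha r_\gamma$ since $\alpha \sim \gamma$), so $L(\calS_{r_\gamma r_\alpha}) = c_\gamma \calS_{r_\alpha}$. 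The Leibniz rule then reads
$$ 2 c_\alpha \calS_{r_\alpha} \;=\; L(\calS_{r_\alpha}^2) \;=\; \sum_{\gamma \sim \alpha} c_\gamma \calS_{r_\alpha}, $$
which is the promised identity.

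Finally, this recurrence places $c$ in the kernel of the Cartan matrix $2I - A$, where $A$ is the adjacency matrix of $D$. For finite type $ADE$ the Cartan matrix is positive definite, so $c = 0$. For $D = A_{\ZZ_+}$, the general solution of $2c_i = c_{i-1} + c_{i+1}$ is $c_i = a + bi$; the boundary condition $2c_1 = c_2$ (vertex $1$ having a single neighbor) forces $a = 0$, leaving the multiples of $c_i \equiv i$. For $D = A_\ZZ$ there is no boundary, and both $c_i \equiv 1$ and $c_i \equiv i$ satisfy the recurrence, spanning the two-dimensional solution space.

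The main obstacle I expect is the Chevalley computation of $\calS_{r_\alpha}^2$: correctly identifying the length-$3$ reflections as $r_i r_j r_i$ for adjacent $i \sim j$, checking that no other positive roots sneak into the sum, and verifying the braid identity that matches $r_\alpha r_\beta$ with $r_\gamma r_\alpha$. Everything downstream is then elementary.
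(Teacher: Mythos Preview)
Your proposal is correct and follows exactly the approach the paper sketches: apply $L$ to the Chevalley--Monk identity $\calS_{r_\alpha}^2 = \sum_{\gamma \sim \alpha} \calS_{r_\gamma r_\alpha}$ and compare with the Leibniz expansion $2c_\alpha \calS_{r_\alpha}$. Your additional details (identifying the height-$2$ roots, checking that only $\martial_\gamma$ survives on $\calS_{r_\gamma r_\alpha}$, and recognizing the recurrence as the kernel of the Cartan matrix) faithfully flesh out what the paper leaves implicit.
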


Hence the $\nabla$ discovered in \cite{ZachEtAl} in the $A_{\ZZ_+}$ case was
the only such operator available. In \cite{Nenashev} it is explained
that $\xi=\sum_{i \in \ZZ} \martial_i$ is special to the back-stable situation of $A_\ZZ$; here
we see that it is the only new option. (The result
\cite[Theorem 6]{Nenashev} is very similar.)

\begin{proof}[Proof sketch]
  The proof amounts to applying $\sum_\alpha c_\alpha \martial_\alpha$ to
  $(\calS_{r_\alpha})^2 = \sum_\beta \calS_{r_\beta r_\alpha}$
  (computed using the Chevalley-Monk rule).
\end{proof}


\section{Not-quite-Hopf algebras and Nenashev operators}

\subsection{The dual algebras}

Define a pairing $Nil(D) \tensor_\ZZ H(D) \rightarrow \ZZ$ by
$$
 p \tensor s \mapsto \text{coefficient of $\calS_e$ in }p(s)
$$
and from there a map $Nil(D) \to H(D)^* := Hom_\ZZ(H(D),\ZZ)$.

The following is well-known to the experts, if not usually expressed
exactly this way (see e.g. \cite{BerensteinRichmond},
\cite[\S7.2]{LLS}). 

\begin{thm}\label{thm:Hopf}
  This map $Nil(D) \to H(D)^*$ is an isomorphism. Unfortunately the induced
  comultiplication $H(D) \to H(D)\tensor H(D)$ is not a ring homomorphism
  (example below), so the two are not thereby dual Hopf algebras.
  (There is an alternative statement explored in \cite{LXZ}.)
\end{thm}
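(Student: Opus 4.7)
The plan has two pieces: first identify $Nil(D)$ with $H(D)^*$ by showing $\{d_\pi\}$ is essentially dual to $\{\calS_\pi\}$; then compute the induced comultiplication on $H(D)$ and display a small counterexample to its being a ring map.

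For the isomorphism, I would evaluate the pairing on basis elements. Whichever action is meant by ``$p(s)$'' (the $\partial$ action, or the $\martial$ action of Theorem \ref{thm:commutant}), the ``easy fact'' proved inside that theorem specializes at $\pi = e$ to $\int \partial_\rho \calS_\sigma = \delta_{\sigma,\rho}$, so the map $Nil(D) \to H(D)^*$ carries the basis $\{d_\pi\}$ onto the Schubert-dual basis $\{\calS_\pi^*\}$ (at worst after the harmless reindexing $\pi \mapsto \pi^{-1}$). Now $H(D) = \bigoplus_\pi \ZZ\calS_\pi$ consists of \emph{finite} combinations, so $H(D)^* = \prod_\pi \ZZ\calS_\pi^*$ is the full product; and $Nil(D)$ was deliberately set up to allow exactly those infinite formal combinations of the $d_\pi$. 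The basis bijection therefore extends to a bijection of $\ZZ$-modules, giving the isomorphism.

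For the second piece, dualizing the multiplication $d_\pi d_\rho = d_{\pi\rho}\,[\ell(\pi)+\ell(\rho)=\ell(\pi\rho)]$ reads off the induced coproduct:
\[ \Delta \calS_w \;=\; \sum_{\substack{\pi\rho = w \\ \ell(\pi)+\ell(\rho)=\ell(w)}} \calS_\pi \tensor \calS_\rho. \]
To show $\Delta$ is not multiplicative, I would test the smallest nontrivial case in $H(A_{\ZZ_+})$. Using $\calS_{r_1}^2 = \calS_{r_2 r_1}$ (equivalent under Lascoux--Sch\"utzenberger to $x_1^2 = \calS_{312}$), the formula gives $\Delta \calS_{r_2 r_1} = \calS_e \tensor \calS_{r_2 r_1} + \calS_{r_2 r_1}\tensor \calS_e + \calS_{r_2}\tensor \calS_{r_1}$, while $(\Delta \calS_{r_1})^2 = \calS_e \tensor \calS_{r_2 r_1} + 2\,\calS_{r_1}\tensor \calS_{r_1} + \calS_{r_2 r_1}\tensor \calS_e$; these visibly differ. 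There is no genuine obstacle in the argument: once the pairing is seen to be Kronecker, everything is formal bookkeeping, and the only creative input is picking a small enough counterexample, for which any length-additive factorization $\pi\rho = w$ with both factors nontrivial suffices.
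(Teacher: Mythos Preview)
Your argument is correct. The paper itself does not supply a proof of the isomorphism $Nil(D)\to H(D)^*$, declaring it ``well-known to the experts''; your dual-basis computation (using $\int \partial_\rho \calS_\sigma = \delta_{\rho,\sigma}$ together with the direct-sum/direct-product distinction) is exactly the standard justification and fills that gap cleanly. Your derivation of the comultiplication $\Delta\calS_w = \sum_{\pi\rho=w,\ \ell\text{-add}} \calS_\pi\otimes\calS_\rho$ matches the paper's.

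The only difference is the choice of counterexample. The paper works in $H(A_\ZZ)$ and computes $\Delta(\calS_{r_2}^2) = \Delta(\calS_{r_1 r_2}+\calS_{r_3 r_2})$ versus $(\Delta\calS_{r_2})^2$, which involves six terms on each side. Your $H(A_{\ZZ_+})$ example $\calS_{r_1}^2 = \calS_{r_2 r_1}$ is strictly smaller and just as decisive; the paper presumably chose $A_\ZZ$ only because the surrounding section is developing the back-stable picture, not because $A_{\ZZ_+}$ fails to exhibit the phenomenon.
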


\newcommand\BS{BS}

There is an analogue of Theorem \ref{thm:LS} for $H(A_\ZZ)$, taking each
$\calS_\pi$ to its
\defn{back-stable Schubert function} $\BS_\pi$ invented by the third author
(and independently by Buch and by Lee), which were studied in
\cite{LLS,Nenashev}. Define a \defn{back-stable function}
$p \in \ZZ[[\ldots,x_{-1},x_0,x_1,x_2,\ldots]]$ to be a power series
\begin{itemize}
\item of finite degree, such that
\item $p$ depends only on the variables $\{x_k, k<N\}$ for some $N\gg 0$, and
\item for some $M\ll 0$, p is symmetric in the variables $\{x_i, i\leq M\}$.
\end{itemize}
One way (as appears in \cite{Nenashev}) to think of the ring of
back-stable functions is as the image of the injection
\begin{eqnarray*}
  Symm \tensor_\ZZ \ZZ[\ldots,x_{-1},x_0,x_1,\ldots] &\to&
\ZZ[[\ldots,x_0,\ldots]] / \langle 
        \text{elementary symmetric functions}\rangle \\
p \tensor q &\mapsto& p(\ldots,x_{-2},x_{-1},x_0) \, q
\end{eqnarray*}

For $\pi \in W(A_\ZZ)$ considered as a finite permutation of $\ZZ$,
and $shift_N(i) := i+N$, observe for $N\gg 0$ that $\pi[N] \coloneqq shift_N(i)\circ \pi\circ shift_{-N}(i)$
 is a permutation of $\ZZ$ that
leaves $-\NN$ in place, and thus has a well-defined Schubert
polynomial. Define the \defn{back-stable Schubert function}
$$ \BS_\pi := \lim_{N\to \infty} 
S_{\pi[N]}(x_{1-N},x_{2-N},\ldots) 
$$
where the limit is computed coefficient-wise (note that any single coefficient settles down to a constant value for all large enough $N$). 

\begin{thm}\cite[Theorem 3.5]{LLS}
  \label{thm:backstable}
  The back-stable Schubert functions lie in, and are a $\ZZ$-basis of,
  the ring of back-stable functions. 
\end{thm}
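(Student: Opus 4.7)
The plan is to prove the two assertions---membership in the ring of back-stable functions, and the basis property---separately, reducing each to structural features of ordinary Schubert polynomials.

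First, I would establish that the coefficient-wise limit defining $\BS_\pi$ exists. The needed back-stability of Schubert polynomials is the statement that, for any monomial $m$ supported in a fixed finite window of consecutive variables, the coefficient of $m$ in $S_{\pi[N]}(x_{1-N},x_{2-N},\ldots)$ is eventually independent of $N$. This is visible from the pipe dream (RC-graph) formula: once $N$ is large enough that the support of $\pi[N]$ has moved past the window, reduced pipe dreams producing $m$ are in shift-bijection from one $N$ to the next.

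Next I would verify the three defining properties of a back-stable function. The degree bound is immediate since $\ell(\pi[N])=\ell(\pi)$. The right-hand support bound holds because, if $b$ bounds the support of $\pi$ from above, then $S_{\pi[N]}$ uses only variables with index $\leq b+N$, and after the substitution $x_i\mapsto x_{i-N}$ only $x_i$ with $i\leq b$ appear. Finally, the far-left symmetry comes from the fact that whenever $i$ is not a descent of $\pi[N]$, the polynomial $S_{\pi[N]}$ is symmetric in $x_i$ and $x_{i+1}$; since the non-descents of $\pi[N]$ include all indices below the shifted support of $\pi$, the substitution transports these transpositions to all consecutive pairs far enough to the left, and they persist in the coefficient-wise limit.

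For the basis property, I would argue linear independence by exhibiting a distinguishing leading monomial in each $\BS_\pi$---for instance the ``bottom'' pipe-dream contribution, whose exponents are essentially the Lehmer code of $\pi$ placed in a suitable shifted window, and which uniquely determines $\pi$. For spanning, I would use the identification of the ring of back-stable functions with $Symm\tensor_\ZZ\ZZ[\ldots,x_{-1},x_0,x_1,\ldots]$ recalled just before the theorem. Splitting each pipe dream for $\pi[N]$ between its far-left columns (which assemble in the limit into a Stanley symmetric function) and its bounded-range columns (which form an ordinary Schubert polynomial) gives a finite factorization of $\BS_\pi$ that expresses it triangularly in the tensor-product basis $\{F_\sigma\tensor S_\tau\}_{\sigma,\tau}$, and triangularity transfers the basis property.

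The main obstacle is the symmetric-tail property: one must carefully interchange the limit in $N$ with the symmetry assertion, so that the symmetries really hold on the full unbounded left tail of variables in $\BS_\pi$ rather than on a finite window that itself slides with $N$. Once this is set up properly, the degree and support bounds are routine bookkeeping, and the basis property reduces to a standard triangularity argument.
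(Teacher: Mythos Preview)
The paper does not supply its own proof of this theorem: the statement is quoted verbatim with the citation \cite[Theorem 3.5]{LLS} and no argument follows. So there is nothing in the present paper to compare your proposal against.

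That said, your outline is a sensible reconstruction of the standard argument and tracks closely what one finds in the cited source. The stability of coefficients via pipe-dream shift, the degree and right-support bounds, and the left-tail symmetry from the non-descent criterion are all correct ingredients; so is the triangularity approach to the basis claim, using the splitting $\BS_\pi=\sum F_\sigma\cdot S_\tau$ into a Stanley-symmetric piece and an ordinary Schubert polynomial. The one place to be careful, as you yourself flag, is the interchange of the coefficient-wise limit with the assertion of symmetry in infinitely many far-left variables: you should phrase it as ``for each fixed $i\leq M$, the transposition $x_i\leftrightarrow x_{i+1}$ fixes every coefficient of $\BS_\pi$,'' which reduces to a statement about each finite-stage $S_{\pi[N]}$ for $N$ large relative to $i$, and then there is no limit-exchange issue at all.
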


In this co\"ordinatization we can compute the comultiplication on $H(A_\ZZ)$
and bound its failure to be a ring homomorphism. Transposing the
multiplication from \S\ref{ssec:twoactions} of $d_\pi d_\rho$, we obtain
$\Delta(\BS_\sigma) = \sum \{BS_\pi \tensor BS_\rho\colon
\sigma = \pi \rho,\ \ell(\sigma) = \ell(\pi)+\ell(\rho)\}$.
Then, alas,
\newcommand\ttensor{{\!\!\>\otimes\!\>\!}}  
\begin{eqnarray*}
  \Delta(\BS_{r_2}^2) =   \Delta(\BS_{r_1r_2} + \BS_{r_3r_2})
                          &=& (\BS_{r_1r_2}\ttensor 1)
                          + (\BS_{r_1}\ttensor \BS_{r_2})
                          + (1\ttensor \BS_{r_1r_2}) \\
                          &&+ (\BS_{r_3r_2}\ttensor 1)
                          + (\BS_{r_3}\ttensor \BS_{r_2})
                             + (1\ttensor \BS_{r_3r_2})\\
\neq\Delta(\BS_{r_2}) \Delta(\BS_{r_2}) = (\BS_{r_2}\ttensor 1 + 1\ttensor\BS_{r_2})^2
                          &=& (\BS_{r_1r_2}\ttensor 1)
                          + (\BS_{r_3r_2}\ttensor 1)
                          + (\BS_{r_2}\ttensor \BS_{r_2}) \\
                          &&+ (\BS_{r_2}\ttensor \BS_{r_2})
                          + (1 \ttensor \BS_{r_1r_2})
                             + (1\ttensor \BS_{r_3r_2} )
\end{eqnarray*}
Luckily $\Delta(\BS_\pi \BS_{\rho[N]})
= \Delta(\BS_{\pi \circ (\rho[N])})
= \Delta(\BS_{\pi}) \Delta(\BS_{\rho[N]})$
for $N\gg 0$. Call this property \defn{``separated Hopfness''}, to be
used below.

\subsection{The Fomin-Greene--Nenashev operators $\xi^\nu$}

With these identifications, and the self-duality of the Hopf algebra
$Symm$ of symmetric functions, we can interpret some results
of Nenashev \cite{Nenashev}:
$$
\begin{matrix}
  H(A_\ZZ) &\xrightarrow{\sim}& \{\text{back-stable functions}\}
  &\xleftarrow{\sim}& Symm \tensor_\ZZ \ZZ[\ldots,x_{-1},x_0,x_1,\ldots]
  &\onto& Symm \\ \\
  Nil(A_\ZZ) & & & \from & & & Symm
\end{matrix}
$$

The map $\onto$ is the \defn{Stanley genus}:
it takes $\calS_\pi$ to its \defn{Stanley symmetric function}
$St_\pi = \sum_\lambda a_\pi^\lambda\, Schur_\lambda$. The lower map, its transpose,
takes $Schur_\lambda$ to $\sum_\pi a_\pi^\lambda\, d_\pi$. If we let this
operator act on $H(A_\ZZ)$ under the $d_\pi \mapsto \martial_\pi$ action, we get
the \defn{Fomin-Greene--Nenashev operator}
$\xi^\lambda := \sum_\pi a_\pi^\lambda \,\martial_\pi$ \cite{FominGreene,Nenashev}.
(See also the $j_\lambda$ operators in the ``Peterson subalgebra''
defined in \cite[\S9.3]{LLS}, which are a double version of the $\xi^\lambda$.)

\newcommand\mm{{\mathfrak m}}
Let $\mm$ denote the kernel of the map $H(A_\ZZ) \onto Symm$.
Using the separated Hopfness and the fact that
$\BS_{\pi[N]}-BS_\pi \in \mm$, one shows that each
$\Delta(pq)-\Delta(p)\Delta(q)$
(which serves as a measure of non-Hopfness)
lies in $\mm \tensor H(A_\ZZ) + H(A_\ZZ) \tensor \mm$. Hence the map 
$H(A_\ZZ)\to Symm$ factors through a map of Hopf algebras. Dually, the
transpose map is a Hopf map to a Hopf sub-bialgebra of $Nil(A_\ZZ)$.
In particular this Hopf map explains Nenashev's
formul\ae\ \cite[\S 4.4]{Nenashev}
$$ \xi^\lambda \xi^\mu = \sum_\nu c_{\lambda\mu}^\nu\, \xi^\nu
\qquad\qquad \xi^\nu(pq) = \sum_{\lambda,\mu} c_{\lambda\mu}^\nu \, \xi^\lambda(p)
\,\xi^\mu(q) $$

\subsection{Interlude (not used elsewhere):
  topological origin of the $\{BS_\pi\}$}

The stability property underlying Lascoux-Sch\"utzenberger's definition
of Schubert polynomials is the fact that each $S_\pi \in H^*(Fl(n))$
is the pullback $\iota_{n+1}^*(S_{\pi\oplus 1})$ along a map
$\iota_{n+1}: Fl(n) \into Fl(n+1)$ taking $(E_\bullet)$ to
$(F_\bullet\colon F_{i\leq n} = E_i\oplus 0,\ F_{n+1} = E_n \oplus \CC)$.
Chaining these together, one builds an element of the inverse limit
of the cohomology rings, a ring
$ \ZZ[[x_1,x_2,\ldots]]/\langle$elementary symmetric functions $e_i\rangle$.
It was then Lascoux-Sch\"utzenberger's pleasant surprise that these
``inverse limit Schubert classes'' lie in (and exactly span) the image of the
injective ring homomorphism $\ZZ[x_1,x_2,\ldots]$ into this algebra.

This admits of a parallel story, based on a different map
$\iota_{1+2n+1}: Fl(2n) \into Fl(2n+2)$ taking $(E_\bullet)$ to
$(F_\bullet\colon F_{i\in [1,2n+1]} = \CC \oplus E_{i-1}\oplus 0,\
F_{2n+2} = \CC \oplus \CC^{2n} \oplus \CC)$. Now, in order to achieve a
coherent labeling (as $n$ varies)
we index the classes in $H^*(Fl(2n))$ using permutations
of $[1-n, n]$ rather than of $[1,2n]$. Once again the inverse limit is
a power series ring modulo elementary symmetrics, but it is {\em no
  longer true} that the inverse limit Schubert classes are representable
by polynomials; rather, they can be represented by back-stable functions.
(And again, they form a basis thereof.)

One advantage of $\iota_{1+2n+1}$ is that it is equivariant w.r.t. the
{\em duality} endomorphism of $Fl(2n)$, which takes $(E_\bullet)$ to
$(E_\bullet^\perp)$,
defined w.r.t. the symplectic form pairing co\"ordinates $i$ and $1-i$, for
$i\in [1,n]$. On the level of classes, this
takes $BS_\pi \mapsto BS_{w_0 \pi w_0}$
where $w_0(i) := 1-i$. On the level of back-stable functions,
it takes $x_i \mapsto -x_{1-i}$,\ $e_i(x_{\leq 0}) \mapsto e_i(x_{\leq 0})$.

Since this duality respects Schubert classes and the alphabet $(x_i)$,
it takes Monk's rules to Monk's rules. In particular it turns the
transition formula (a specific Monk's rule)
$$ BS_\pi = x_i BS_{\pi'} + \sum_{\text{certain }\pi''} BS_{\pi''}
\qquad\qquad\text{into}\qquad\qquad
BS_\rho = -x_j BS_{\rho'} + \sum_{\text{certain }\rho''} BS_{\rho''} $$
\hskip -.05in
which implies (unstably) the cotransition formula
$x_j \calS_{\rho'} = -\cancel{\calS_\rho}
+ \sum_{\text{certain }\rho''} \calS_{\rho''} $
of \cite{K}.

\section{Relation to Klyachko's genus}

\subsection{Klyachko's ideal and its prime factors}

Let $T \leq GL_n(\CC)$ denote the group of diagonal matrices, and
$TV_{perm} \subseteq Fl(n)$ be the \defn{permutahedral toric variety}
obtained as the closure of a generic $T$-orbit on the flag manifold $Fl(n)$.
This subvariety arises as a Hessenberg variety (see e.g. \cite{abe2020survey})
and is of key importance in \cite{Huh,NadeauTewari}. 

The inclusion $\iota\colon TV_{perm} \into Fl(n)$ induces a map
backwards on cohomology, which is neither injective nor surjective.
Klyachko \cite{klyachko1985orbits} presented its image
$im(\iota^*)$ (with rational coefficients), and a formula for $\iota^*$
evaluated on Schubert symbols:
\begin{eqnarray*}
  H^*(Fl(n);\QQ) &\to& im(\iota^*) \iso \QQ[k_0,\ldots,k_n]\bigg/
                   \left\langle
                   \begin{array}{c}
                     k_i(-k_{i-1}+2k_i-k_{i+1}) = 0,\ \  1\leq i \leq n-1 \\
                   k_0=k_n=0 
                   \end{array}\right\rangle \\                     
  \calS_\pi &\mapsto& \frac{1}{\ell(\pi)!} \sum_{Q \in RW(\pi)} \ \prod_{q\in Q} k_q
\quad\text{where $RW(\pi)$ is the set of reduced words}
\end{eqnarray*}

Taking forward- and back-stable limits, while leaving behind geometry,
we get the \defn{Klyachko genus}
$
  H(A_\ZZ) \to \QQ[\ldots,k_{-1},k_0,k_1,\ldots]\bigg/
                   \left\langle \
                     k_i(-k_{i-1}+2k_i-k_{i+1}) = 0\ \forall i\in \ZZ \
                   \right\rangle 
$
whose map on Schubert symbols is given by the same formula.
We use this to recover a result of Nenashev,
foreshadowing some results in \S\ref{sec:egg}:

\begin{thm}\cite[Proposition 3 and discussion after]{Nenashev}
\label{thm:Nenashev}
  Let $RW(\pi)$ denote the set of reduced words for $\pi$. There must
  {\bf exist} (but the proof doesn't find one) a ``rectification'' map
  $$ \{\text{shuffles of any word in $RW(\pi)$ with any word in $RW(\rho)$}\}
  \to \coprod_\sigma RW(\sigma) 
$$ \hskip -.05in 
  whose fiber over any reduced word for $\sigma$ has
  size 
  $c_{\pi\rho}^\sigma$, the coefficient from
$ 
    \calS_\pi \calS_\rho = \sum_\sigma c_{\pi\rho}^\sigma \, \calS_\sigma.
$ 
\end{thm}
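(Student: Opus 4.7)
The plan is to apply the Klyachko genus to the identity $\calS_\pi\calS_\rho=\sum_\sigma c^\sigma_{\pi\rho}\calS_\sigma$ and then to evaluate the resulting equation at a convenient point of the Klyachko variety, reducing the theorem to a counting argument.

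Applying Klyachko's formula term by term yields the equation
$$\frac{1}{\ell(\pi)!\,\ell(\rho)!}\sum_{\substack{Q_1\in RW(\pi)\\Q_2\in RW(\rho)}}\prod_{q\in Q_1\sqcup Q_2}k_q \;=\; \sum_\sigma c^\sigma_{\pi\rho}\,\frac{1}{\ell(\sigma)!}\sum_{Q\in RW(\sigma)}\prod_{q\in Q}k_q$$
in the Klyachko quotient ring. I would then specialize $k_i\equiv 1$, which is a point of the Klyachko variety since $1\cdot(-1+2-1)=0$, so every generator of the defining ideal evaluates to $0$. Under this specialization every monomial collapses to $1$. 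Since $c^\sigma_{\pi\rho}=0$ unless $\ell(\sigma)=\ell(\pi)+\ell(\rho)=:m$, clearing denominators by $m!$ yields the numerical identity
$$\binom{m}{\ell(\pi)}|RW(\pi)|\,|RW(\rho)| \;=\; \sum_\sigma c^\sigma_{\pi\rho}\,|RW(\sigma)|.$$

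The left-hand side counts shuffles: a shuffle is a triple $(Q_1,Q_2,W)$ where $W$ is one of the $\binom{m}{\ell(\pi)}$ interleavings of $Q_1$ and $Q_2$. The right-hand side counts $\coprod_\sigma RW(\sigma)$ with each reduced word for $\sigma$ weighted by $c^\sigma_{\pi\rho}$. Because the Schubert structure constants are nonnegative integers, the equality of totals on the two sides is exactly what is needed to build, by fiat, a set-theoretic map whose fibers over each $W\in RW(\sigma)$ have the prescribed size $c^\sigma_{\pi\rho}$. This completes the proof; no real obstacle arises, which is why the argument is nonconstructive (``the proof doesn't find one''). The only substantive inputs are Klyachko's formula itself and the positivity of the Schubert constants; the verification that $k_i\equiv 1$ satisfies the Klyachko relations is a one-line arithmetic check.
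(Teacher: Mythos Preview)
Your proof is correct and follows essentially the same route as the paper's: apply the Klyachko genus, specialize $k_i\equiv 1$, clear denominators using $\ell(\sigma)=\ell(\pi)+\ell(\rho)$, and interpret the resulting numerical equality as a count of shuffles versus a weighted count of reduced words. Your explicit mention of the positivity of the $c^\sigma_{\pi\rho}$ and the verification that $k_i\equiv 1$ satisfies the Klyachko relations are small points the paper leaves implicit.
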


\begin{proof}
  Apply the Klyachko genus to that last equation, 
  then set all $k_i=1$, obtaining
  $$
  \frac{1}{\ell(\pi)!}\sum_{P\in RW(\pi)}\prod_P 1 \ \ \ \
  \frac{1}{\ell(\rho)!}\sum_{R\in RW(\rho)}\prod_R 1
  =
  \sum_\sigma c_{\pi\rho}^\sigma \,
  \frac{1}{\ell(\sigma)!}\sum_{S\in RW(\sigma)}\prod_S 1
  $$
  Since $c_{\pi\rho}^\sigma = 0$ unless $\ell(\sigma) = \ell(\pi)+\ell(\rho)$,
  we can restrict to those $\sigma$. Multiplying through:
  $$
  \#RW(\pi)\ \#RW(\rho) {\ell(\pi)+\ell(\rho)\choose \ell(\pi)}
  =
  \sum_\sigma c_{\pi\rho}^\sigma \, \#RW(\sigma)
  $$ \hskip -.05in 
  Let $C_{\pi\rho}^\sigma$ be a set with cardinality $c_{\pi\rho}^\sigma$
  (and wouldn't you like to know one?). Then we can interpret the above as
  $$ \#
   \{\text{shuffles of any word in $RW(\pi)$ with any word in $RW(\rho)$}\}
   \ =\ \# \coprod_\sigma (C_{\pi\rho}^\sigma \times RW(\sigma)) 
$$ \hskip -.05in 
   Hence there exists a bijection; compose it with the
   projection to $\coprod_\sigma RW(\sigma)$.
\end{proof}

We can further simplify the target of this genus by modding
out by each of the minimal prime ideals that contain the Klyachko
ideal. We get ahold of these using the Nullstellensatz,\footnote{This
  isn't quite fair, in that we are working in infinite dimensions, but
  we won't worry about it. All we're really trying to do here is choose,
  for each $i$, which factor of $k_i(-k_{i-1}+2k_i-k_{i+1})$
  to mod out.}  i.e. by looking at the components of the
solution set to Klyachko's equations.

\begin{prop}\label{prop:Nullstellensatz}
  Consider $\ZZ$-ary tuples $(k_i)_{i\in \ZZ}$ of complex numbers satisfying
  the Klyachko equalities. This set is the (nondisjoint) union of the
  following countable set of $2$-planes:
  \begin{itemize}
  \item For $a,b \in \CC$,  let $k_m = am+b$.  
  \item For $i\leq j$ each in $\ZZ$, and $x,y \in \CC$ a pair of
    ``slopes'', let
    $    k_m =    \begin{cases}
      x(m-i) & \text{if }k \leq i \\
      0 & \text{if }k \in [i,j] \\
      y(m-j) & \text{if }k \geq j.
    \end{cases}
    $
  \end{itemize}
\end{prop}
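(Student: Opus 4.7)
The plan is to work componentwise. Each Klyachko equation $k_i(k_{i-1}-2k_i+k_{i+1})=0$ offers a binary choice at index $i$: either $k_i=0$, or the discrete Laplacian $k_{i-1}-2k_i+k_{i+1}$ vanishes (``$k$ is locally affine at $i$''). Setting $I := \{i \in \ZZ : k_i = 0\}$, the task becomes to describe the possible shapes of $I$ together with the shape of $k$ on $\ZZ \setminus I$, on which the discrete Laplacian vanishes identically.

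The crux is the following lemma: $\ZZ \setminus I$ cannot have any \emph{bounded} maximal interval. Indeed, if $[a,b] \subset \ZZ \setminus I$ is such a maximal interval then $k_{a-1}=k_{b+1}=0$; but vanishing of the Laplacian at every $m \in [a,b]$ forces the string $k_{a-1}, k_a, \ldots, k_{b+1}$ to be an arithmetic progression, and one starting and ending at $0$ is identically $0$, contradicting $k_a \neq 0$. Hence $\ZZ \setminus I$ is one of: empty; all of $\ZZ$; a single left-infinite ray; a single right-infinite ray; or the disjoint union of one of each.

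With this in hand the 2-planes of the proposition appear directly. If $\ZZ \setminus I = \ZZ$ then $k$ is globally affine $am+b$, the first family. If $\ZZ \setminus I = \emptyset$ then $k \equiv 0$, which lies in the first family ($a=b=0$) and in every second-family plane ($x=y=0$). Otherwise, on a left-unbounded non-zero ray the Laplacian vanishes and the boundary value is $0$, pinning $k$ down to $x(m-i)$ with $i$ the leftmost index of the adjacent zero-strip; symmetrically on the right one gets $y(m-j)$. The central zero-strip $[i,j]$ (possibly a single point when the kink is isolated, or a half-line when the opposite non-zero ray is absent, the latter handled by setting the missing slope to $0$) produces exactly the second family. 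Overlaps between families (e.g.\ an affine $am+b$ that happens to vanish at $-b/a\in\ZZ$ also lies in the $(i,j)=(-b/a,-b/a)$ plane) are consistent with the proposition's nondisjoint union. The only nontrivial step is the bounded-component lemma; the rest is case-chasing.
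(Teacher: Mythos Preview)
Your argument is correct. The key observation---that a bounded maximal interval of $\{m : k_m \neq 0\}$ forces an arithmetic progression with zero endpoints, hence is impossible---cleanly reduces the problem to the five shapes you list, and the case analysis from there is straightforward. You might add one sentence verifying the converse (that every point of each listed $2$-plane satisfies the Klyachko equations), but this is immediate since affine sequences have vanishing discrete Laplacian and the remaining indices have $k_m=0$.

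As for comparison: the paper does not actually prove this proposition. It cites a closely related computation in Nadeau--Tewari (for the $q$-deformed Klyachko relations) and remarks that the present statement is recoverable as a $q\to 1$ limit. Your direct argument is therefore more self-contained than what the paper offers, and is the natural elementary route: factor each equation, track the zero set, and use the discrete maximum principle (in the guise of ``an arithmetic progression with two zeros is zero'') to rule out bounded nonzero islands.
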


After completing this work, we learned of a very similar calculation
in \cite[\S 3.4]{NadeauTewari}, so we omit the proof of proposition
\ref{prop:Nullstellensatz} (obtainable as a sort of $q\to 1$ limit of theirs).

Each component defines a quotient of the Klyachko ring, namely
$$
\QQ[\ldots,k_{-1},k_0,k_1,\ldots]\bigg/
\left\langle
  -k_{m-1}+2k_m-k_{m+1} = 0\ \forall m\in \ZZ
\right\rangle
$$
$$
\forall i\leq j, \qquad
\QQ[\ldots,k_{-1},k_0,k_1,\ldots]\bigg/
\left\langle
  \begin{array}{cc}
    k_m = 0 &\forall m\in [i,j] \\
  -k_{m-1}+2k_m-k_{m+1} = 0& \forall m\notin [i,j]
  \end{array}
\right\rangle
$$ \hskip -.05in 
Call the map of $H(A_\ZZ)$ to the first quotient the \defn{affine-linear genus}.

There is a slight subtlety in that the Klyachko ideal is not radical,
and as such, the map from the Klyachko ring to the direct sum of these
quotients is not injective. We will return to this minor matter below.

\subsection{Dropping the other genera}

The other components (besides the one giving the affine-linear genus)
are useless, in the following senses.  Say $k_m = 0$ for some $m$;
then there are three situations.
\begin{enumerate}
\item {\em Some reduced word for a permutation $\pi$ uses the letter  $m$.}
  Then {\em all} reduced words do, with the effect that $\calS_\pi \mapsto 0$
  in the quotient ring.
\item {\em Each reduced word for $\pi$ uses some letters $>m$ and some $<m$.}
  Then $\pi = \pi_{<m} \pi_{>m}$ where each
  uses only letters $<m$, $>m$ respectively. In this case
  $\calS_\pi = \calS_{\pi_{<m}} \calS_{\pi_{>m}}$.
\item {\em Each reduced word for $\pi$ only uses letters on one side of $m$.}
  At this point there is nothing to be gained by setting $k_m=0$; we could
  work with just the affine-linear genus.
\end{enumerate}
Our principal interest in genera is to study \defn{Schubert calculus},
the structure constants $c_{\pi\rho}^\sigma$
of the multiplication of Schubert symbols.
That is hard to do if the symbols map to zero (situation \#1), silly to do
directly if the symbols are are themselves products (situation \#2),
and in situation \#3 might as well be done using the affine-linear genus.
As such, at this point we cast aside the Klyachko genus in favor
of the affine-linear genus $\gamma$:
\begin{eqnarray*}
  \gamma:\ H(A_\ZZ) &\to& \QQ[a,b], \qquad \qquad\qquad
  \calS_\pi \mapsto \frac{1}{\ell(\pi)!}\ \sum_{P \in RW(\pi)}\ \prod_{i\in P} (ai+b)
\end{eqnarray*}

The assiduous reader might be guessing now that the information lost
when passing from the Klyachko ideal to its radical is similarly
negligible for Schubert calculus purposes. And indeed: if we factor the
Klyachko ideal as an intersection of primary instead of prime components,
we run into the ideals
$$
\forall i\leq j, \qquad
\QQ[\ldots,k_{-1},k_0,k_1,\ldots]\bigg/
\left\langle
  \begin{array}{cc}
    k_m^2 = 0 &\forall m\in [i+1,j-1] \\
    k_i = k_j = 0 &\\
  -k_{m-1}+2k_m-k_{m+1} = 0& \forall m\notin [i,j]
  \end{array}
\right\rangle
$$
\hskip -.05in 
These would let us study $\pi,\rho,\sigma$ whose reduced words use only
the letters in the range $[i+1,j-1]$, and each at most once. This is an
extremely limited case.

\section{The affine-linear genus $\gamma$
  from the martial derivations}

Recall the derivations
$$ \nabla = \sum_m m \martial_m \qquad\qquad \xi = \sum_m \martial_m $$
\hskip -.05in 
{\em Being} derivations, they exponentiate to automorphisms of
$\QQ \tensor_\ZZ H(A_\ZZ)$ (where the $\QQ$ is necessitated by
the denominators in the exponential series).

\begin{thm}\label{thm:triangle}
  The following triangle commutes: \quad
  $
  \begin{matrix}
    && H(A_\ZZ) \\ 
    & \qquad\qquad e^{a\nabla+b\xi} \swarrow  && \searrow \gamma \quad \quad \\ 
    & \QQ[a,b] \tensor_\ZZ H(A_\ZZ) & \onto & \QQ[a,b] \\ 
    & \calS_{\pi} & \mapsto & \delta_{\pi,e}
  \end{matrix}
  $
\end{thm}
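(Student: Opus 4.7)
The plan is to reduce the statement to a direct calculation using Theorem \ref{thm:commutant} (so that $\nabla$ and $\xi$, hence their linear combinations, act as elements of $Nil(A_\ZZ)$) and the explicit action $\martial_m \calS_\pi = \calS_{r_m\pi}[r_m\pi < \pi]$.

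First I would combine the two derivations into the single operator $a\nabla + b\xi = \sum_{m\in\ZZ}(am+b)\,\martial_m \in \QQ[a,b]\tensor Nil(A_\ZZ)$. Since each $\martial_m$ has degree $-1$, only the term $k = \ell(\pi)$ in the exponential series $e^{a\nabla+b\xi} = \sum_{k\geq 0} \tfrac{1}{k!}(a\nabla+b\xi)^k$ can contribute to the coefficient of $\calS_e$ when applied to $\calS_\pi$ (smaller $k$ produce classes of positive degree, larger $k$ annihilate $\calS_\pi$). So the desired coefficient equals
\[
\frac{1}{\ell(\pi)!}\sum_{(m_1,\ldots,m_{\ell(\pi)})\in\ZZ^{\ell(\pi)}}\prod_{j=1}^{\ell(\pi)}(am_j+b)\cdot\bigl[\text{coefficient of }\calS_e\text{ in }\martial_{m_{\ell(\pi)}}\cdots\martial_{m_1}\calS_\pi\bigr].
\]

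Next I would identify exactly which tuples $(m_1,\ldots,m_{\ell(\pi)})$ yield a nonzero indicator. By the definition of $\martial_m$, each successive application must strictly decrease length, so $\martial_{m_{\ell(\pi)}}\cdots\martial_{m_1}\calS_\pi = \calS_{r_{m_{\ell(\pi)}}\cdots r_{m_1}\pi}$ (with every intermediate product length-additive), and this equals $\calS_e$ precisely when $r_{m_{\ell(\pi)}}\cdots r_{m_1} = \pi^{-1}$ is a reduced factorization. Equivalently, $(m_{\ell(\pi)}, m_{\ell(\pi)-1},\ldots, m_1) \in RW(\pi^{-1})$. The reversal map gives a bijection $RW(\pi^{-1})\leftrightarrow RW(\pi)$ that preserves the multiset of letters, hence the product $\prod_j(am_j+b)$. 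The sum thus becomes $\frac{1}{\ell(\pi)!}\sum_{P\in RW(\pi)}\prod_{i\in P}(ai+b)$, matching $\gamma(\calS_\pi)$.

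There is essentially no serious obstacle; the only point that needs care is confirming that the infinite sum $\sum_{m\in \ZZ}(am+b)\martial_m$ acts in a well-defined way on $H(A_\ZZ)$ (this is guaranteed by Theorem \ref{thm:commutant}, since any fixed $\calS_\pi$ is annihilated by all but finitely many $\martial_m$) and that the exponential series converges in the same sense (bounded above by the degree of the input). I would insert a one-line remark to that effect, and then present the length-degree collapse plus reduced-word bookkeeping as the body of the proof.
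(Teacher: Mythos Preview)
Your proof is correct and follows essentially the same route as the paper's: collapse the exponential series to the single term of degree $\ell(\pi)$, expand $(\sum_m (am+b)\martial_m)^{\ell(\pi)}$, and identify the surviving terms with reduced words for $\pi^{-1}$. You spell out two small points the paper leaves implicit (the reversal bijection $RW(\pi^{-1})\leftrightarrow RW(\pi)$ and the well-definedness of the infinite sum acting on a fixed $\calS_\pi$), but these are routine and do not constitute a different approach.
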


\begin{proof}
  The proof is not conceptual; we compute both sides and compare.
  Indeed, we find the statement intriguing exactly because we know of
  no geometric reason the two maps should be related.
  $$ e^{a\nabla+b\xi} \cdot \calS_\pi
  \ =\ \sum_n \frac{1}{n!} (a\nabla+b\xi)^n \cdot \calS_\pi
  \ \mapsto\ \frac{ (a\nabla+b\xi)^{\ell(\pi)} \cdot \calS_\pi }{\ell(\pi)!}
  \ =\ \frac{ \left(\sum_i (ai+b)\martial_i\right) ^{\ell(\pi)} \cdot \calS_\pi }{\ell(\pi)!}
  $$
  \hskip -.05in 
  Expanding $\left(\sum_i (ai+b)\martial_i\right) ^{\ell(\pi)}$,
  the nonvanishing terms correspond to reduced words of length $\ell(\pi)$,
  and only those that multiply to $\pi^{-1}$ survive application to $\calS_\pi$.
\end{proof}

In particular the proof of Theorem \ref{thm:Nenashev} essentially
amounts to applying $\exp(\xi)$. (Oddly, the original proof in \cite{Nenashev}
is closer to an application of $\exp(\nabla)$.)

There is a fascinating \defn{$q$-Klyachko genus} introduced in
\cite[\S3.4]{NadeauTewari}:
\begin{eqnarray*}
  \gamma_q:\ H(A_\ZZ) &\to& \QQ(q)[\alpha,\beta] \\
  \calS_\pi &\mapsto& \frac{1}{\ell(\pi)\qtorial} \sum_{Q:\ \prod Q = \pi} \
                      q^{\comaj(Q)}  \prod_{i\in Q} \left(\alpha q^i+\beta\right)
\end{eqnarray*}
Here 
$m\qtorial$ is the $q$-torial $\prod_{j=1}^m [j]_q$, and $\comaj(Q)$ is the sum of
the positions of the ascents. We looked for a long time for a $q$-analogue of
Theorem \ref{thm:triangle}, to no avail: it would provide an
automorphism of $H(A_\ZZ)(q)[\alpha,\beta]$
whose $\ell=0$ part is the $q$-Klyachko genus. 

\section{Rectification and the $q$-statistic}\label{sec:egg}

We pursue a $q$-analogue of (Nenashev's) Theorem \ref{thm:Nenashev}.
Applying Nadeau-Tewari's $q$-Klyachko genus to 
$\calS_\pi \calS_\rho = \sum_\sigma c_{\pi\rho}^\sigma \, \calS_\sigma$
we get
\begin{eqnarray*}
&&  \frac{1}{\ell(\pi)\qtorial}
  \sum_{P\in RW(\pi)} \ q^{\comaj(P)}  \prod_{i\in P} (\alpha q^i + \beta) \quad
\frac{1}{\ell(\rho)\qtorial}
  \sum_{R\in RW(\rho)} \ q^{\comaj(Q)}  \prod_{i\in R} (\alpha q^i + \beta) \\
  &=&
\sum_\sigma c_{\pi\rho}^\sigma \, 
  \frac{1}{\ell(\sigma)\qtorial}
  \sum_{S\in RW(\sigma)} \ q^{\comaj(S)}  \prod_{i\in S} (\alpha q^i + \beta)
\end{eqnarray*}
Multiplying through, we get
\begin{eqnarray*}
  {\ell(\pi)+\ell(\rho)\choose \ell(\pi)}_q\
  \sum_{P \in RW(\pi)\atop R\in RW(\rho)} q^{\comaj(P)\atop +\comaj(R)} \prod_{i\in P\coprod R}(\alpha q^i + \beta)
  &=&
\sum_\sigma c_{\pi\rho}^\sigma \, 
  \sum_{S\in RW(\sigma)} \ q^{\comaj(S)}  \prod_{i\in S} (\alpha q^i + \beta)
\end{eqnarray*}

Let's interpret both sides at $\alpha = \beta = q = 1$, again using
a mystery set $C_{\pi\rho}^\sigma$ with cardinality $c_{\pi\rho}^\sigma$.
Define a \defn{barred word} for $\pi$ as a reduced word in which some
letters are overlined, e.g.  $12\overline 1$ for $(13)$.
Then the left side of the above equation counts pairs $(P,R)$ of
barred words, shuffled together, where the barring indicates ``use the
$\alpha q^i$ term'' rather than the $\beta$ term.  Meanwhile, the right side
counts pairs $(\tau,S)$ where $S$ is a barred word for some $\sigma$,
and $\tau$ is in $C_{\pi\rho}^\sigma$.

\begin{thm}\label{thm:qNenashev}
  Define the $q$-statistic of a barred word as the sum of the locations
  of the ascents, plus the sum of the barred letters.
  
  Define the $q$-statistic of a shuffle $\Sh$ of a pair $(P,R)$
  of barred words as the sum of the two $q$-statistics, plus the number
  of inversions in the shuffle (letters in $R$ leftward of letters in $P$).

  Then there {\bf exists} (but the proof doesn't find one)
  a ``rectification'' map
  $$ \{\text{shuffles of pairs $(P,R)$ of barred words for $\pi,\rho$}\}
  \quad \to \quad \coprod_\sigma\ \{\text{barred words for $\sigma$}\}
  $$
  preserving the number of bars and the $q$-statistic, whose fiber
  over each word for $\sigma$ is of size $c_{\pi\rho}^\sigma$.
\end{thm}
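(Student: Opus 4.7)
The plan is to imitate the proof of Theorem \ref{thm:Nenashev}, while carrying along two additional gradings that the $q$-Klyachko genus $\gamma_q$ automatically tracks: the $q$-statistic (recorded by $q$) and the number of bars (recorded by a new formal variable $u$).

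Starting from the displayed equation immediately preceding the theorem, I would first clear the factor ${\ell(\pi)+\ell(\rho)\choose\ell(\pi)}_q$ to the left (using $c_{\pi\rho}^\sigma \neq 0 \Rightarrow \ell(\sigma)=\ell(\pi)+\ell(\rho)$ to cancel the $\qtorial$ factors), and then specialize $\alpha = u$, $\beta = 1$. Each factor $\prod_{i\in W}(uq^i + 1)$ then expands as $\sum_{B\subseteq W} u^{|B|} q^{\sum_{i\in B} i}$, which I interpret as the choice of which letters of $W$ to bar. Combined with the pre-existing $q^{\comaj(\cdot)}$ factor, each reduced word is refined into a sum over all its barrings, weighted by $u^{\#\text{bars}} q^{\comaj \,+\, \text{sum of barred letters}}$.

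The second step uses the standard identity that ${m+n\choose m}_q$ is the generating function for shuffles of two words of lengths $m,n$ weighted by the number of inversions, counted as ``letters of the second word leftward of letters of the first''. Absorbing this prefactor, the left-hand side becomes the bivariate generating function $\sum u^{\#\text{bars}} q^{q\text{-stat}(\Sh)}$ over shuffles of barred pairs $(P,R)$ for $(\pi,\rho)$, with the $q$-statistic in precisely the form defined in the theorem. The right-hand side, after the same barring expansion, becomes $\sum_\sigma c_{\pi\rho}^\sigma \sum_S u^{\#\text{bars}(S)} q^{q\text{-stat}(S)}$; introducing an abstract set $C_{\pi\rho}^\sigma$ of cardinality $c_{\pi\rho}^\sigma$, this is the same bivariate enumeration of $\coprod_\sigma C_{\pi\rho}^\sigma \times \{\text{barred words for }\sigma\}$.

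Finally, since the two generating functions in $(u,q)$ are equal as polynomials, so is the cardinality of each layer at fixed $(\#\text{bars}, q\text{-stat})$ on each side; a statistic-preserving bijection therefore exists, and composition with the projection onto $\coprod_\sigma \{\text{barred words for }\sigma\}$ produces the desired rectification map with fibers of size $c_{\pi\rho}^\sigma$. The main subtlety is simply matching the three contributions to the $q$-statistic with their three origins on the generating-function side: the $\comaj$'s coming out of $\gamma_q$ itself, the ``sum of barred letters'' from expanding each $(uq^i+1)$, and the ``inversions in the shuffle'' from the $q$-binomial identity. As in Theorem \ref{thm:Nenashev}, the proof is fundamentally non-constructive (``equal cardinalities yield a bijection''), and so does not exhibit any canonical choice of rectification.
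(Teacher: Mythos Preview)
Your proposal is correct and follows essentially the same route as the paper: the argument is the combinatorial reading of the displayed polynomial identity in $\alpha,\beta,q$ just before the theorem, with the $q$-binomial interpreted via inversions of shuffles and each factor $(\alpha q^i+\beta)$ interpreted as the choice of barring. Your specialization $\alpha=u,\ \beta=1$ is a harmless repackaging (the identity is homogeneous of degree $\ell(\pi)+\ell(\rho)$ in $\alpha,\beta$), and you are slightly more explicit than the paper in matching the three pieces of the $q$-statistic to their generating-function origins.
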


We note that the affine-linear genus doesn't let one produce such a
combinatorial result, insofar as the factors $ai+b$ can involve $i<0$
(in the back-stable setting of $A_\ZZ$). 

{\em Example.} These examples get large very quickly, so we restrict to
the fully barred case. Let $\pi=\rho=12463578$, chosen to give a
$c_{\pi\rho}^\sigma>1$ (and chosen stably enough that the terms in the
product don't move $-\NN$). Each of $\pi$ and $\rho$ have two reduced
words ($354$ and $534$, $\comaj$s $1$ and $2$, each of total $12$),
and there are $6\choose 3$ ways to shuffle, for a total of
$2\cdot 2\cdot {6\choose 3} = 80$; the resulting $q$-statistics range
from $26 = 1+12+1+12+0$ to $37 = 2+12+2+12+3\cdot 3$.
There are $7$ terms $\calS_\sigma$ in the product
$\calS_\pi\calS_\rho$ (one with coefficient $2$) with various numbers
of reduced words.  \setcounter{MaxMatrixCols}{40}
$$
\begin{matrix}
  \text{$q$-statistic:} & 26 & 27 & 28 & 29 & 30 & 31 & 32 & 33 & 34 & 35 & 36 & 37\\ \hline
  & 1 & 3 & 5 & 8 & 11 & 12 & 12 & 11 & 8 & 5 & 3 & 1 &\text{total $=80$}\\
  \sigma\\
23561478 & 1 & 1 & 2 & 1 & 2 & 1 & 1 &   &   &   &   &  \\
14562378 &   & 1 &   & 1 & 1 & 1 &   & 1 &   &   &   &  \\
13572468 &   &   & 1 & 2 & 2 & 3 & 3 & 2 & 2 & 1 &   &  \\
13572468 &   &   & 1 & 2 & 2 & 3 & 3 & 2 & 2 & 1 &   &   &\text{again}\\
23471568 &   & 1 & 1 & 2 & 2 & 2 & 1 & 1 &   &   &   &  \\
13482567 &   &   &   &   & 1 & 1 & 2 & 2 & 2 & 1 & 1 &  \\
12673458 &   &   &   &   & 1 &   & 1 & 1 & 1 &   & 1 &  \\
12583467 &   &   &   &   &   & 1 & 1 & 2 & 1 & 2 & 1 & 1 
\end{matrix}
$$ \hskip -.05in 
In the line with ``total $= 80$'', we count the number of fully barred
shuffles with given $q$-statistic. In each of the lower lines,
we put $\sigma$ on the left, and on the right
we list the number of fully barred words for it with given $q$-statistic.
Then the theorem asserts that each number atop a column is the sum of
the numbers below. There is a silly rotational near-symmetry tracing to the
fact that $\pi$ and $\rho$ are Grassmannian permutations for
self-conjugate partitions.

\section{Equidistribution of inversion number
  vs. $\comaj$ on $[n]\choose m$}

Let $J \subseteq \ZZ$ be a set of $n$ numbers,
no two adjacent. Then the product $\prod_{j\in J} r_j$ is well-defined i.e$.$
is independent of the order; indeed, the reduced words for $\prod_{j\in J} r_j$
are in correspondence with permutations of $J$.
The same holds when multiplying subsets of $J$.

Fix $K\subseteq J$ and let $\rho = \prod_K r_k$,
$\pi = \prod_{J\setminus K} r_j$. 
Then $\calS_\pi \calS_\rho = \calS_{\pi \rho}$, and Theorem \ref{thm:qNenashev}
(again in the fully barred case) predicts a bijection
$$ \{\text{insertions of reduced words $R$ for $\rho$
  into reduced words $P$ for $\pi$} \}  \quad
\to \quad RW(\pi \rho)
$$
\hskip -.05in 
such that $\big[\comaj(P)$ + $\comaj(R)$ + the inversion number of the
shuffle$\big]$
matches $\comaj$ of the resulting word $\Sh$. Note that the obvious map (just
insert $R$ where the shuffle suggests) does {\em not} correspond
these two statistics!

If we break $J$ not into two subsets, but all the way down into individual
letters, this recovers the equidistribution on $S_n$ of the statistics
$\ell$ and $\comaj$ (or $\maj$); see e.g. \cite[Proposition 1.4.6]{EC1}.

This hints at a stronger result: that for any two strings $P,R$ such that $PR$ has no repeats,
on the set $\{$shuffles $\Sh\}$ the distributions of the statistic
$\comaj(P) + \comaj(R) + \ell(\Sh)$ and the statistic $\comaj(\Sh)$ match.
(Theorem \ref{thm:qNenashev} only guarantees this {\em after summing}
over all $P\in RW(\pi),R\in RW(\rho)$.) 
And indeed, this stronger claim holds \cite{GarsiaGessel}.

\section*{Acknowledgements}
We thank Marcelo Aguiar, Hugh Dennin, Yibo Gao, Thomas Lam,
  Philippe Nadeau, Gleb Nenashev, Mario Sanchez, Thomas B\r a\r ath
  Sj\"oblom, and David E Speyer. RG was partially supported by NSF grant DMS--2152312. AK was partially supported by NSF grant DMS--2246959.

\bibliographystyle{plain}
\bibliography{martials}

\end{document}